\newtheorem{theorem}{Theorem}
\newtheorem{lemma}[theorem]{Lemma}
\newtheorem{corollary}[theorem]{Corollary}
\newtheorem{proposition}[theorem]{Proposition}
    \newenvironment{restatetheorem}[1]
    {\innercustomthm}
    {\endinnercustomthm}
\theoremstyle{remark}
\newtheorem{example}[theorem]{Example}
\newtheorem{remark}[theorem]{Remark}
\newtheorem{definition}[theorem]{Definition}
\newcommand{\R}{\mathbb{R}}                       
\newcommand*{\RP}{\mathbb{R}\mathrm{P}}   % projective space
\newcommand{\C}{\mathcal{C}}      % Collection of convex sets    
\DeclareMathOperator{\conv}{conv} % Convex hull
\DeclareMathOperator{\proj}{proj} % Projective hull
\DeclareMathOperator{\red}{red}   % Reduced word
\DeclareMathOperator{\nerve}{nerve}   % Nerve complex
\newcommand*{\eqdef}{\stackrel{\mbox{\normalfont\tiny def}}{=}}  % definition by equality      
\title{Colorful Words and $d$-Tverberg Complexes}
\author{
Florian Frick
\thanks{Department of Mathematical Sciences, Carnegie Mellon University, Pittsburgh, PA 15213, USA, and Institut f\"ur Mathematik, Freie Universit\"at Berlin, Arnimallee 2, 14195 Berlin, Germany, frick@cmu.edu\@. Supported by NSF grant DMS 1855591, NSF CAREER grant DMS 2042428, and a Sloan Research Fellowship.}
\and 
R. Amzi Jeffs
\thanks{Department of Mathematical Sciences, Carnegie Mellon University, Pittsburgh, PA 15213, USA, amzij@cmu.edu\@. Supported by the National Science Foundation through Award No. 2103206.}}
\date{\today}
\begin{document}
\normalem 
\maketitle

\begin{abstract}
\noindent
    We give a complete combinatorial characterization of weakly $d$-Tverberg complexes. These complexes record which intersection combinatorics of convex hulls necessarily arise in any sufficiently large general position point set in~$\R^d$. This strengthens the concept of $d$-representable complexes, which describe intersection combinatorics that arise in at least one point set. Our characterization allows us to construct for every fixed~$d$ a graph that is not weakly $d'$-Tverberg for any~${d' \le d}$, answering a question of De Loera, Hogan, Oliveros, and Yang.
\end{abstract}

\section{Introduction}

Tverberg's theorem states that any set of $(d+1)(r-1) + 1$ points in $\R^d$ can be partitioned into $r$ parts so that the convex hulls of these parts share a common point.
Over the last five decades Tverberg's theorem has inspired numerous extensions and variations~\cite{barany2018}. Concurrently, intersection patterns of convex sets have been investigated through $d$-representable complexes, whose $k$-dimensional faces correspond to nonempty $(k+1)$-fold intersections among a family of convex sets in $\R^d$~\cite{tancer2013}.

Recall that a simplicial complex is a downward-closed set system. A simplicial complex $\Delta$ consisting of subsets of $[n] \eqdef \{1,2, \dots, n\}$ is called \emph{weakly $d$-Tverberg} if for any sufficiently large point set $P \subseteq \R^d$ in general position there are pairwise disjoint sets $P_1, \dots, P_n \subseteq P$ such that for every~${\sigma \subseteq [n]}$ we have that $\bigcap_{i \in \sigma} \conv(P_i) \ne \emptyset$ if and only if $\sigma \in \Delta$. Tverberg's theorem states that the $(r-1)$-simplex is weakly $d$-Tverberg for every $d\ge 1$, and $(d+1)(r-1)+1$ points in~$\R^d$ are sufficient to guarantee the existence of $P_1, \dots, P_r$ as above. 

De Loera, Hogan, Oliveros, and Yang~\cite{altered_nerves} initiated the study of \emph{d-Tverberg complexes}, 
which are a restriction of weakly $d$-Tverberg complexes in which $P_1, \dots, P_n$ are required to partition all of~$P$.
Every (weakly) $d$-Tverberg complex is $d$-representable, and in~\cite{altered_nerves} it is shown that the converse fails for $d=2$. Here we show that the converse fails with arbitrarily large dimension gap. More precisely, for every fixed~$d\ge1$ we construct graphs (which are $3$-representable, as follows from more general results of Wegner and Perel\textquotesingle man \cite[Section 3.1]{tancer2013}) that are not weakly $d'$-Tverberg for any~${d'\le d}$. This answers a question raised by De~Loera, Hogan, Oliveros, and Yang.

\begin{theorem}\label{thm:graphs}
For every $d \ge 1$, there is a graph that is not (weakly) $d'$-Tverberg for any $d'\le d$. 
\end{theorem}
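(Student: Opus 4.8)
The plan is as follows. First we reduce the statement to a purely combinatorial assertion about words. Suppose a graph $G$ on $[n]$ is weakly $d'$-Tverberg, and apply the definition to the point sets $P_N=\{\gamma(1),\dots,\gamma(N)\}\subseteq\R^{d'}$ lying on the moment curve $\gamma(t)=(t,t^2,\dots,t^{d'})$, which are in general position. For every large $N$ we obtain disjoint $P_1,\dots,P_n\subseteq P_N$ whose convex-hull intersection pattern is $G$. Reading $\bigcup_i P_i$ along the curve and recording, for each point, which part contains it yields a word $w$ over $[n]$. For $i\ne j$, the hulls $\conv(P_i)$ and $\conv(P_j)$ are disjoint if and only if a hyperplane separates them; pulling the defining affine functional back along $\gamma$, this happens if and only if some univariate polynomial of degree $\le d'$ is positive on the $P_i$-points and negative on the $P_j$-points. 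Such a polynomial has at most $d'$ real roots, so this is possible exactly when the letters $i$ and $j$ alternate at most $d'$ times in $w$. Hence $w$ has the property that, for all $i\ne j$, the number of $\{i,j\}$-alternations of $w$ is $\ge d'+1$ precisely when $ij\in E(G)$. We will choose $G$ triangle-free, so that the higher-order intersections are automatically empty (any $\sigma$ with $|\sigma|\ge3$ contains a non-edge) and this necessary condition is in fact equivalent to realizability on $P_N$; the combinatorial characterization of weakly $d$-Tverberg complexes moreover promotes ``realizable on $P_N$'' to ``weakly $d'$-Tverberg'', but for the negative result only the implication above is needed. Thus it suffices to exhibit, for each $d$, a triangle-free graph $G_d$ that is not the ``$(\ge k{+}1)$-alternation graph'' of any word for any $k\le d$.

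For $G_d$ we take a $C_4$-free bipartite graph with parts $X,Y$ of size $t$ and as many edges as possible --- for instance (a large induced subgraph of) the point--line incidence graph of a projective plane of order about $\sqrt t$, which is $C_4$-free and has $\Theta(t^{3/2})$ edges --- with $t=t(d)$ chosen large at the end. Since every graph is $3$-representable by a result of Wegner and Perel'man, such a $G_d$ will be $3$-representable but, once the lemma below is in hand, not weakly $d'$-Tverberg for any $d'\le d$.

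The heart of the argument, and the step I expect to be hardest, is the following extremal claim: \emph{if a $C_4$-free bipartite graph with parts of size $t$ is the $(\ge k{+}1)$-alternation graph of a word, then it has at most $c_k\cdot t$ edges, where $c_k$ depends only on $k$}. To prove this one works with a word $w$ realizing the graph. Restricting $w$ to the colors of $X$ (and reducing immediate repetitions) gives a sequence in which no two letters alternate more than $k$ times --- a Davenport--Schinzel sequence of order $k$ --- hence of length $t^{1+o(1)}$, and likewise for $Y$, so $w$ itself has near-linear length. Each edge $ij$ forces $i$ and $j$ to alternate $\ge k+1$ times, which pins the occurrences of $i$ and $j$ together in a controlled way, and $C_4$-freeness prevents two distinct edges from being ``charged'' to the same interaction; a counting argument over the bounded-complexity structure of $w$ --- quantifying exactly how the intra-part alternation bound restricts the inter-part alternations that edges can create --- should then yield the linear bound on the number of edges. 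Making this quantification precise is the main obstacle.

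Granting the lemma, for each $d'\le d$ the graph $G_d$ would have at most $c_{d'}\cdot t$ edges were it the $(\ge d'{+}1)$-alternation graph of a word; choosing $t$ large enough that $\Theta(t^{3/2})$ exceeds $c_{d'}\cdot t$ for every one of the finitely many $d'\le d$, we conclude that $G_d$ is not the $(\ge d'{+}1)$-alternation graph of any word, hence --- by the first paragraph --- not weakly $d'$-Tverberg, for any $d'\le d$. Being $3$-representable, $G_d$ answers the question of De Loera, Hogan, Oliveros, and Yang.
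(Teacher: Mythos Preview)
Your reduction to a word problem is sound and parallels the paper's Theorem~\ref{thm:weak-iff-colorful} (you only use the easy direction, via the moment curve, which is all that is needed for a negative result). But from there your route diverges from the paper's, and the divergence leaves a genuine gap.

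The gap is your central extremal claim: that a $C_4$-free bipartite graph on $t+t$ vertices realized as a $(\ge k{+}1)$-alternation graph has $O_k(t)$ edges. You do not prove this, and the sketch you give does not work as stated. In particular, the step ``restricting $w$ to $X$ gives a Davenport--Schinzel sequence of order $k$, hence length $t^{1+o(1)}$, and likewise for $Y$, so $w$ itself has near-linear length'' is false: the Davenport--Schinzel bound controls $|\red(w|_X)|$ and $|\red(w|_Y)|$, but $|\red(w)|$ can be arbitrarily larger. For instance, a single $x\in X$ adjacent to every $y\in Y$ can alternate with each $y$ many times, making $\red(w)$ long while $\red(w|_X)$ has length~$1$. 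So the ``bounded-complexity structure of $w$'' you appeal to is not established, and the subsequent charging argument has no foundation to stand on. Without this lemma the proof collapses; you yourself flag it as ``the main obstacle,'' and it remains one.

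For comparison, the paper avoids any extremal edge-counting. Its graph $G_d$ is bipartite with a \emph{small} side $A$ of size $n$ and a \emph{huge} side $B$ containing, for every $\sigma\subseteq A$, exponentially many vertices with neighborhood exactly~$\sigma$. The point is not edge density but neighborhood diversity: since $A$ is independent, $\red(W(A))$ has length $O_d(n^2)$, so there are only polynomially many ``insertion patterns'' describing how a $b\in B$ can sit relative to the chunks of $W(A)$; but $2^n$ distinct neighborhoods must be encoded, forcing some neighborhood $\sigma$ to require many insertions of $b$, and then pigeonhole among the exponentially many $b$'s with neighborhood $\sigma$ produces two with identical patterns, which are then shown to alternate enough to create a spurious edge inside~$B$. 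This is a pigeonhole-on-patterns argument rather than an extremal-graph argument, and it sidesteps exactly the counting you were unable to carry out.
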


Along the way to constructing these graphs, we give a complete combinatorial characterization of weakly $d$-Tverberg complexes.
This characterization will involve finding words whose letters come from $[n]$, with certain alternating subwords corresponding to faces.
Oliveros and Torres~\cite{oliveros_torres} were the first to use this approach in the study of $d$-Tverberg complexes, defining \emph{general $d$\nobreakdash-word-representable graphs} and proving that certain families of such graphs---for example, bipartite graphs---are $d$-Tverberg for appropriate values of $d$.
Inspired by this strategy, we define the class of \emph{$d$-colorfully representable complexes} (Definition \ref{def:d-colorfully-representable}) and connect their combinatorics to the geometry of finite point sets in $\R^d$.
While Oliveros and Torres primarily worked constructively, we proceed in the opposite direction: our proof of Theorem~\ref{thm:graphs} is based on combinatorially recognizing obstructions that prevent a graph from being $d$-Tverberg.

Below, a \emph{word} is simply a finite list of symbols from some chosen alphabet.
A \emph{subword} is any word obtained by deleting some (possibly none) of the instances of letters in a word. 

\begin{definition}
Let $\sigma\subseteq [n]$ be a set of size $r\ge 1$, and let $d\ge 1$. A \emph{$d$-colorful word} on alphabet $\sigma$ is a word $W$ of length $(d+1)(r-1) +1$ such that for every $i\in [d+1]$ the restriction of $W$ to the indices $(i-1)(r-1)+1$ through $i(r-1)+1$ inclusive contains every letter from $\sigma$ exactly once.
We call these segments of consecutive indices \emph{blocks} in $W$.
\end{definition}

For example, the word $\underline{124\overline{3}}\overline{42}\underline{\overline{1}34\overline{2}}\overline{134}$ is a 3-colorful word on the alphabet $\{1,2,3,4\}$.
Above, we have under- and overlined the four blocks in which every letter must appear exactly once.
We note that $d$-colorful words also appear in equivalent forms as ``rainbow partitions" in \cite{por} and ``colorful Tverberg types" in \cite{bukh_loh_nivasch}.
The relationship between the combinatorics of these words and the geometry of Tverberg partitions---a topic which both these papers deal with---will allow us to completely characterize weakly $d$-Tverberg complexes.

\begin{definition}\label{def:d-colorfully-representable}
A simplicial complex $\Delta\subseteq 2^{[n]}$ is called \emph{$d$-colorfully representable} if there is a word $W$ on alphabet $[n]$ so that for every $\sigma\subseteq[n]$, we have $\sigma\in \Delta$ if and only if $W$ contains a $d$-colorful subword on alphabet $\sigma$.
\end{definition}

\begin{theorem}\label{thm:weak-iff-colorful}
A simplicial complex is weakly $d$-Tverberg if and only if it is $d$-colorfully representable. 
\end{theorem}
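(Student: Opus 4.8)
The plan is to prove both directions by carefully translating between the geometry of Tverberg partitions and the combinatorics of colorful words. The key conceptual bridge is a result already alluded to in the excerpt (present in \cite{por} and \cite{bukh_loh_nivasch}): given a point set $P \subseteq \R^d$ in general position and a subset $\sigma$ of indices, whether one can find pairwise disjoint sets $(P_i)_{i \in \sigma}$ inside $P$ whose convex hulls share a common point depends, up to a count of points and a general-position argument, on the existence of a suitable colorful word recording the "order type" of a cleverly chosen affine slicing. So I need to make precise the statement: for a sufficiently large general position $P$ in $\R^d$ placed along the moment curve (or any point set whose order type is "generic enough"), the set $\sigma$ admits a Tverberg-type partition exactly when some word $W = W(P)$ on alphabet $[n]$ contains a $d$-colorful subword on $\sigma$. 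That reduces $d$-Tverberg-ness to the existence of a single word working for all $\sigma$ simultaneously.

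\textbf{Forward direction ($d$-colorfully representable $\Rightarrow$ weakly $d$-Tverberg).} Suppose $\Delta$ is $d$-colorfully representable via a word $W$ on alphabet $[n]$. Given a large general position point set $P \subseteq \R^d$, I would first argue that, after discarding points, $P$ contains a subset whose combinatorial type (say, the order type, or more robustly: the pattern of signs of all relevant determinants) is rich enough that $W$ can be "realized" geometrically. Concretely, I want to assign to each instance of a letter $j$ in $W$ a point of $P$, consecutively along some generic direction, so that each block of a would-be $d$-colorful subword on $\sigma$ corresponds to $r$ points (one per color of $\sigma$) spanning a simplex, and the $d+1$ blocks' simplices have a common transversal point. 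This is exactly the mechanism behind the colorful Tverberg / Bárány–Larman type arguments: $(d+1)(r-1)+1$ points split into $d+1$ blocks along a line give, via a Radon/Tverberg partition in a lifted space, a common point. The main work is to pick $P$-points for the instances of $W$ so that for every $\sigma \in \Delta$ the colorful subword guaranteed by $W$ actually yields disjoint $P_i$'s with intersecting hulls, while for every $\sigma \notin \Delta$ no choice of disjoint $P_i$'s works — the latter requires that absence of a colorful subword really obstructs all partitions, which is where general position of $P$ and a careful converse (a point set whose "word" is exactly $W$ has no extra Tverberg partitions) is needed.

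\textbf{Reverse direction (weakly $d$-Tverberg $\Rightarrow$ $d$-colorfully representable).} Here I would take a specific, maximally generic large point set $P$ — points on the moment curve $\gamma(t) = (t, t^2, \dots, t^d)$ with $t_1 < \cdots < t_N$ — apply the hypothesis to get disjoint $(P_i)_{i\in[n]}$ with intersection pattern exactly $\Delta$, and then read off a word $W$ from the linear order in which the points of $\bigcup_i P_i$ appear along the curve, labeling each point by the index $i$ of the part $P_i$ containing it. I then need the two implications: (i) if $\sigma \in \Delta$, the existence of a common point in $\bigcap_{i\in\sigma}\conv(P_i)$ forces, by a Tverberg/colorful-Carathéodory argument applied to the points of $\bigcup_{i\in\sigma}P_i$ on the moment curve, a subfamily of $(d+1)(r-1)+1$ of these points that splits into $d+1$ consecutive-along-$\gamma$ blocks each containing all colors of $\sigma$, i.e.\ $W$ contains a $d$-colorful subword on $\sigma$; (ii) conversely, if $W$ contains such a subword on some $\sigma$, then those points give an explicit Tverberg partition witnessing $\sigma \in \Delta$ (so no extra colorful subwords appear). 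Implication (i) is essentially the content of the cited "rainbow partition" results for points in convex position / on the moment curve, and implication (ii) is a direct geometric construction (a colorful word on a block structure along $\gamma$ always produces intersecting hulls, by a Radon-partition-in-each-coordinate or a lifting argument).

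\textbf{The main obstacle} I anticipate is establishing the exact equivalence "common point in $\bigcap_{i\in\sigma}\conv(P_i)$ $\iff$ $d$-colorful subword on $\sigma$" with the \emph{same} word simultaneously for all $\sigma$, rather than per-$\sigma$. The "only if" part for a single $\sigma$ is known for points in convex position; the delicate points are (a) upgrading to general position point sets that are not literally on the moment curve (needed for the forward direction's "for \emph{any} sufficiently large $P$"), which likely requires a Ramsey-type argument to find within any large general position $P$ a subconfiguration with moment-curve-like order type, and (b) ensuring that the word read off in the reverse direction does not accidentally contain a colorful subword on some $\sigma \notin \Delta$, which amounts to showing implication (ii) is tight — every colorful subword genuinely forces an intersection, so the word faithfully encodes $\Delta$ and nothing more. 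I would handle (a) by invoking the standard fact that sufficiently large point sets in general position contain large subsets in "general convex position" with prescribed order type (a consequence of the Erdős–Szekeres / order-type Ramsey machinery), and (b) by proving the clean geometric lemma that $d+1$ simplices arising as the blocks of a $d$-colorful word on points along any convex curve in $\R^d$ always have nonempty common intersection — the combinatorial heart of the whole characterization.
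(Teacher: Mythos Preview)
Your plan is essentially the paper's approach: both directions go through point sequences whose minimal Tverberg partitions are \emph{exactly} the $d$-colorful ones, and the dictionary between partitions of such a sequence and words (your implications (i) and (ii)) is precisely the content of Lemma~\ref{lem:colorful-partition}. For the direction weakly $d$-Tverberg $\Rightarrow$ $d$-colorfully representable, the paper takes a large moment-curve-type sequence (whose existence it cites from \cite{por, bukh_loh_nivasch}), applies the hypothesis to a subsequence, and reads off~$W$; for the converse it finds, inside an arbitrary large general position set, a subsequence of length $|W|$ with colorful minimal Tverberg partitions and realizes~$W$ there. So the architecture you outline is correct.

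The place where your proposal is too optimistic is obstacle~(a). The statement you need---every sufficiently large general position sequence in $\R^d$ contains a long subsequence whose minimal Tverberg partitions are exactly the colorful ones---is \emph{not} a consequence of ``standard Erd\H{o}s--Szekeres / order-type Ramsey machinery.'' It is P\'or's universality theorem \cite[Thm.~1.7]{por}, stated in the paper as Theorem~\ref{thm:universal}, and it is the single substantive black box in the proof. (The paper additionally needs Lemma~\ref{lem:general-position} to pass from general to strong general position before invoking P\'or.) Your obstacle~(b), that a $d$-colorful word along a convex curve forces intersecting hulls and that \emph{only} colorful words do, is likewise not proved in the paper but cited from \cite{por, bukh_loh_nivasch}. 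So your plan would work, but two of the lemmas you propose to prove by hand are in fact nontrivial theorems that the paper imports; you should recognize them as such rather than as routine steps.
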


The forward direction of Theorem \ref{thm:weak-iff-colorful} is a consequence of the existence of linearly ordered point sets in $\R^d$ whose minimal Tverberg partitions correspond to $d$-colorful words---for example the ``stretched diagonal" of Bukh, Loh, and Nivasch~\cite{bukh_loh_nivasch}. The converse follows from a recent universality result of P\'or~\cite{por} on Tverberg partitions.

\begin{example}\label{ex:colorfully-representable}
\begin{figure}[h!]
    \centering
    \includegraphics{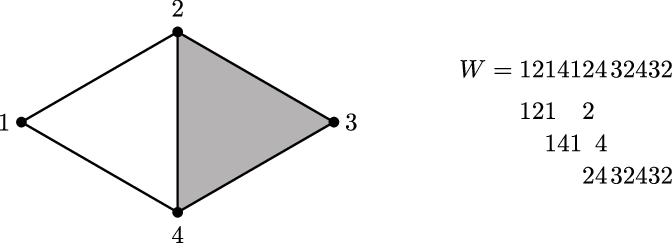}
    \caption{A simplicial complex and a word that $2$-colorfully represents it.}
    \label{fig:colorful-word}
\end{figure}

Consider the simplicial complex with facets $12$, $14$, and $234$ on vertex set $\{1,2,3,4\}$. 
This complex is $2$-colorfully represented by the word $W = 121412432432$. 
Figure \ref{fig:colorful-word} shows this complex and subwords of $W$ which correspond to the faces.
In fact, the figure only shows subwords corresponding to facets, but a $d$-colorful word on alphabet $\sigma$ contains $d$-colorful words on every smaller alphabet (see Lemma \ref{lem:monotone}), so it suffices to find the maximal $d$-colorful subwords. 
\end{example}

\begin{remark} We note that $d$-colorfully representable complexes naturally extend the definition of general $d$-word-representable graphs formulated by Oliveros and Torres~\cite{oliveros_torres}.
A graph is \emph{general $d$-word-representable} if one can find a word whose letters are the vertices of the graph, and whose alternating subwords of length $d+2$ correspond to edges of the graph. Hence general $d$-word-representable graphs are exactly the 1-skeleta of $d$-colorfully representable complexes.
In particular, the two notions coincide for triangle-free graphs. \end{remark}

Depending on one's context, weakly $d$-Tverberg complexes may be a more natural tool than $d$-Tverberg complexes.
Weakly $d$-Tverberg complexes are consistent with the usual approach in other Ramsey-type results that seek to classify the emergence of substructures locally instead of global behavior, which is captured by Tverberg complexes in the strong sense.

\section{Background}

Before proving our main theorems, we first introduce some notation and summarize relevant background material. 
The \emph{nerve} of a collection $\C = \{C_1, \ldots, C_n\}$ of convex sets is the simplicial complex 
\[
\nerve(\C) \eqdef \Big\{\sigma\subseteq [n] \, \Big\vert\, \bigcap_{i\in\sigma} C_i \neq \emptyset \Big\}.
\]
We say that a simplicial complex $\Delta\subseteq 2^{[n]}$ is \emph{partition induced} on a finite set $P$ of points in $\R^d$ if there is a partition $\{P_1,\ldots, P_n\}$ of $P$ so that $\Delta$ is the nerve of the collection $\{\conv(P_1), \ldots, \conv(P_n)\}$. 
With this terminology we can rephrase the definition of (weakly) $d$-Tverberg complexes.
Below, a point set $P\subseteq \R^d$ is in \emph{general position} if for every $k\le d+1$, no set of $k$ points from $P$ lie on a common $(k-2)$-dimensional affine subspace.

\begin{definition}[\cite{altered_nerves}]\label{def:dtverberg}
A simplicial complex $\Delta\subseteq 2^{[n]}$ is \emph{$d$-Tverberg} if there is a constant $C$ so that $\Delta$ is partition induced on any point set $P\subseteq \R^d$ in general position with at least $C$ points. 
\end{definition}

\begin{definition}\label{def:weaklydtverberg}
A simplicial complex $\Delta\subseteq 2^{[n]}$ is \emph{weakly $d$-Tverberg} if there is a constant $C$ so that $\Delta$ is partition induced on a subset of any point set $P\subseteq \R^d$ in general position with at least $C$ points. 
\end{definition}

We will often order point sets $P\subseteq \R^d$, and regard them as sequences.
This allows us to tie the combinatorics of $d$-colorful words to the geometry of Tverberg partitions. 
A \emph{Tverberg partition} of a finite point sequence is a collection of disjoint subsequences whose convex hulls share a common point.
Note that we are abusing terminology slightly, since a Tverberg partition is not necessarily a partition of the entirety of our original sequence.
A Tverberg partition is called \emph{minimal} if deleting any point yields a collection of subsequences whose convex hulls no longer share a common point.
The following multipartite generalization of Kirchberger's theorem, first proved by P\'or \cite{por_thesis}, implies that every minimal Tverberg partition with $r$ parts in $\R^d$ consists of at most $(d+1)(r-1)+1$ points in total (see also the work of Arocha, B\'ar\'any, Bracho, Fabila, and Montejano \cite[Cor.~4]{very_colorful}).

\begin{proposition}\label{prop:multipartite}
Let $P_1,\ldots, P_r$ be disjoint subsets of $\R^d$. Then $\bigcap_{i=1}^r \conv(P_i) \neq \emptyset$ if and only if there exist $P_i'\subseteq P_i$ so that $\bigcap_{i=1}^r \conv(P_i')\neq\emptyset$ and $\bigcup_{i=1}^r P_i'$ contains at most $(d+1)(r-1)+1$ points. 
\end{proposition}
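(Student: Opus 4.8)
\emph{Proof plan.} The reverse implication is immediate from monotonicity of the convex hull: if $P_i'\subseteq P_i$ for every $i$, then $\conv(P_i')\subseteq\conv(P_i)$, so a common point of the sets $\conv(P_i')$ is in particular a common point of the sets $\conv(P_i)$. All the content is in the forward direction, and the plan is to encode the existence of a common point as feasibility of a linear system whose number of constraints is exactly $(d+1)(r-1)+1$, and then to pass to a basic feasible solution of that system.

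To set it up, I would first reduce to the case that each $P_i$ is finite, which is harmless because a point of $\bigcap_{i=1}^r\conv(P_i)$ already lies in the convex hull of a finite subset of each $P_i$ (indeed, by Carath\'eodory's theorem, of at most $d+1$ points of each $P_i$). Fix $x\in\bigcap_{i=1}^r\conv(P_i)$ and, for each $i$, write $x=\sum_{p\in P_i}\lambda_p\,p$ with $\lambda_p\ge 0$ and $\sum_{p\in P_i}\lambda_p=1$. Collecting all of these coefficients into one nonnegative vector $\lambda=(\lambda_p)_{p\in\bigcup_i P_i}$ produces a feasible solution of the linear system consisting of the $d(r-1)$ scalar equations $\sum_{p\in P_i}\lambda_p\,p=\sum_{p\in P_1}\lambda_p\,p$ for $i=2,\dots,r$ together with the $r$ scalar equations $\sum_{p\in P_i}\lambda_p=1$ for $i=1,\dots,r$, subject to $\lambda\ge 0$. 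The crucial bookkeeping point is that eliminating the common point $x$ in this way leaves a system $A\lambda=b$ with exactly $d(r-1)+r=(d+1)(r-1)+1$ rows.

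Next I would apply the standard fact that a feasible system $A\lambda=b$, $\lambda\ge 0$, admits a solution $\lambda^*$ for which the columns of $A$ indexed by the positive entries of $\lambda^*$ are linearly independent; in particular $\lambda^*$ has at most $\operatorname{rank}(A)\le(d+1)(r-1)+1$ positive entries. Taking $P_i'=\{p\in P_i:\lambda^*_p>0\}$, the normalization equations force each $P_i'$ to be nonempty, the equations of the first group say that the point $y=\sum_{p\in P_i'}\lambda^*_p\,p$ does not depend on $i$, so that $y\in\bigcap_{i=1}^r\conv(P_i')$, and since the $P_i$ are pairwise disjoint we conclude $\bigl|\bigcup_{i=1}^r P_i'\bigr|=\sum_{i=1}^r|P_i'|\le(d+1)(r-1)+1$.

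I do not anticipate a real obstacle. The two things requiring care are the constraint count and the nonemptiness of each $P_i'$. The count is precisely the reason to eliminate $x$ rather than to keep it as an explicit variable: retaining $x$ would push the number of equations beyond $(d+1)(r-1)+1$ and would also introduce unconstrained coordinates into the basic-feasible-solution argument. Nonemptiness is guaranteed by the normalization equations $\sum_{p\in P_i}\lambda^*_p=1$, which forbid any block of the support of $\lambda^*$ from vanishing.
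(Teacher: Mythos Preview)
Your argument is correct. The reduction to a linear system $A\lambda=b$, $\lambda\ge 0$, with exactly $(d+1)(r-1)+1$ rows, followed by passing to a basic feasible solution, is a clean and standard way to prove this multipartite Kirchberger-type statement; the constraint count and the nonemptiness of each $P_i'$ are handled correctly.

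As for comparison: the paper does not supply its own proof of this proposition. It is stated with attribution to P\'or's thesis and to Arocha, B\'ar\'any, Bracho, Fabila, and Montejano, and is used as a black box. So there is nothing in the paper to compare your argument against beyond noting that your linear-programming approach is essentially the one underlying the cited references.
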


Given a collection of disjoint subsequences $P_1,\ldots, P_r$ of a sequence $P$, one can form a word on alphabet $[r]$ by associating the points of $P_i$ with the letter $i$ for every $i\in[r]$, and writing down the sequence of these letters as they appear in $P$ from start to end.
We say that $P_1,\ldots, P_r$ is a \emph{$d$-colorful partition} if this word is $d$-colorful.
In particular, a $d$-colorful partition will consist of exactly $(d+1)(r-1)+1$ points in total. 
Again note the slight abuse of terminology: a colorful partition is only a partition of a subset of the original sequence. 

There are arbitrarily large point sequences in $\R^d$ whose minimal Tverberg partitions are exactly the $d$-colorful partitions.
One example is a collection of points on the moment curve, chosen so that their coordinates increase heavily (see \cite[page 4]{por} for a discussion). 
Another example, provided by work of Bukh, Loh, and Nivasch \cite[Section 4]{bukh_loh_nivasch}, is the diagonal of the stretched grid. 

A further important fact, established by P\'or \cite[Theorem 1.7]{por}, is that for every positive integer~$m$, any sufficiently large point sequence in strong general position in $\R^d$ contains a subsequence of length $m$ whose minimal Tverberg partitions are exactly the colorful ones. 
Strong general position means, informally, that affine hulls of subsets of the point set intersect generically (see Definition~\ref{def:sgp}). 

We wish to apply P\'or's result to point sequences that are only in general position. 
To do this, it suffices to argue that for every $n$, any sufficiently large general position set must contain a subset of $n$ points in strong general position.
The natural approach to proving this is to build up a strong general position subset of a general position set in an iterative, greedy way.
However, one can become ``stuck" when taking this approach: Doignon and Valette~\cite{doignon-valette} showed that in dimension $d = 5$ there exists a finite set in strong general position that is maximal. 
The following section will circumvent this subtlety. 

\subsection{Large general position sets contain large strong general position subsets.}
To start, we recall the formal definition of strong general position.
\begin{definition}\label{def:sgp}
    A finite set $P \subseteq \R^d$ is said to be in \emph{strong general position} if for any $r \ge 1$ and any pairwise disjoint subsets $P_1, \dots, P_r \subseteq S$ we have that
        \begin{equation}\tag{$\star$}\label{eq:sgp}
        \dim \bigcap_{i=1}^r \mathrm{aff}(P_i) = \max\left \{-1,  d-  \sum_{i=1}^r (d-\dim \mathrm{aff}(P_i))\right\}.
    \end{equation}
\end{definition}

\noindent When $\R^d$ is regarded as a subset of $\RP^d$ and affine hulls are replaced by projective hulls in the above expression, we obtain the analogous notion of being \emph{fully independent}~\cite{doignon-valette}. 

Perhaps surprisingly, strong general position and full independence are not comparable notions.
To see this, first observe that the set of vertices of a parallelogram in $\R^2$ is not in strong general position (opposite sides span parallel lines) but this set is fully independent.
On the other hand, Doignon and Valette~\cite{doignon-valette} constructed a 5-dimensional set with 12 points which is in strong general position, and is not fully independent. 
Moreover, they showed that such sets are exactly the maximal sets in strong general position.
That is, a strong general position set is not fully independent if and only if it is inclusion-maximal among strong general position sets. 

We wish to establish that large general position sets contain strong general position subsets of growing size.
It turns out that it is more straightforward to establish the analogous result for sets that are both in strong general position \emph{and} fully independent. 

\begin{theorem}\label{thm:sgp}
For every $n\ge 1$ and $d\ge 1$, there exists $N = N(n,d)\ge 1$ so that every set of $N$ points in general position in $\R^d$ contains a subset of $n$ points that is in strong general position and fully independent. 
\end{theorem}
\begin{proof}
    Fix a set $Q\subseteq \RP^d$ consisting of $d$ points whose projective hull is the hyperplane at infinity. 
    Our starting point is the observation that if $P\subseteq \R^d$ is such that $P\cup Q$ is fully independent, then $P$ is both fully independent and in strong general position. 
    Indeed, $P$ is fully independent because $P\cup Q$ is, and the only way that $P$ can fail to be in strong general position is for $P$ to contain disjoint subsets $P_1,\ldots, P_r$ so that $\bigcap_{i=1}^r \proj(P_i)$ is nonempty and contained in the hyperplane at infinity---if this intersection contains any real points, then the intersection of the corresponding affine hulls would have the same dimension, satisfying (\ref{eq:sgp}). 
    But then observe that $\proj(Q) \cap \bigcap_{i=1}^r \proj(P_i)$ would be equal to $\bigcap_{i=1}^r \proj(P_i)$, when it should in fact have dimension one less, contradicting the fact that $P\cup Q$ is fully independent.

   With this observation in hand, it will suffice to argue that for every $n\ge 1$ and $d\ge 1$, there exists a constant $N = N(n,d)$ so that every set of $N$ points in general position in $\R^d$ contains a subset $P$ of size $n$ so that $P\cup Q$ is fully independent. 
To establish this, we argue that there is a constant $C(n,d)$ so that if $P\subseteq \R^d$ contains $n-1$ points and $P\cup Q$ is fully independent, then any general position set $X\subseteq \R^d$ of size $C(n,d)$ has $p\in X$ so that $P\cup\{p\}\cup Q$ is also fully independent.
    The choice $N(n,d) = \sum_{i=1}^{n} C(i,d)$ is then sufficient to prove the theorem inductively.
    From the first $N(n-1,d) = \sum_{i=1}^{n-1} C(i,d)$ points in general position, we extract a point set $P$ of size $n-1$, and the remaining $C(n,d)$ points allow us to extract one further point so that we have $n$ points in total whose union with $Q$ is fully independent. 

To prove the existence of $C(n,d)$, it suffices to argue that the set of points in $\R^d$ that can be added to $P\cup Q$ to obtain a larger fully independent set are those in the complement of a union of subspaces with positive codimension, where the number of such subspaces depends only on $n$ and~$d$. 
Doignon and Valette~\cite[Lemma 1]{doignon-valette} showed that if $Z\subseteq \RP^d$ is fully independent and a point $p$ avoids all proper subspaces of the form \[
\proj\left(Z_r\cup\bigcap_{i=1}^{r-1}\proj(Z_i)\right)
\]
where $Z_1,\ldots, Z_r$ are disjoint subsets of $Z$, then $Z\cup \{p\}$ is also fully independent. 
Setting $Z = P\cup Q$, we see that the number of subspaces that must be avoided is at most the number of tuples of disjoint subsets of $P\cup Q$, which depends only on $n$ and $d$, establishing the result. 
\end{proof}

The following theorem was proved for point sets in strong general position by P\'or~\cite{por}, and Theorem~\ref{thm:sgp} allows us to state the result for point sets in general position. 

\begin{theorem}[{\cite[Thm.~1.7]{por}}] 
\label{thm:universal}
Given $d,m,r\in\mathbb N$ with $r\ge 2$, there is $N = N(d,m,r)\in\mathbb N$ such that every sequence of length $N$ in $\R^d$ in general position contains a subsequence of length $m$ whose minimal Tverberg partitions are exactly the colorful ones.
\end{theorem}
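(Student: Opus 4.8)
The plan is to deduce this directly from P\'or's original theorem together with Lemma~\ref{lem:general-position}, so that the statement amounts to a change of hypothesis from ``strong general position'' to ``general position.'' P\'or's result, as recalled in the discussion preceding the theorem, furnishes a threshold $N_0 = N_0(d,m,r)$ with the property that every sequence of length $N_0$ in \emph{strong} general position in $\R^d$ contains a length-$m$ subsequence whose minimal Tverberg partitions are exactly the colorful ones. To bootstrap from general position to strong general position, I would invoke Lemma~\ref{lem:general-position} with $n = N_0$, obtaining a threshold which I take to be $N(d,m,r)$; concretely, $N(d,m,r)$ is the value $N(N_0, d)$ produced by that lemma, so that any point set of this size in general position in $\R^d$ contains a subset of $N_0$ points in strong general position.

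The argument then runs as follows. Given a sequence $P$ of length $N(d,m,r)$ in general position, apply Lemma~\ref{lem:general-position} to extract a subset of $N_0$ points in strong general position, and endow it with the ordering inherited from $P$, so that it becomes a subsequence $Q$ of $P$. Since $Q$ has length $N_0$ and is in strong general position, P\'or's theorem applies to $Q$ and yields a subsequence $R$ of length $m$ whose minimal Tverberg partitions are exactly the colorful ones. As $R$ is also a subsequence of $P$, this proves the claim.

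The only point requiring care is that the two relevant notions --- being in (strong) general position, and the description ``the minimal Tverberg partitions are exactly the colorful ones'' --- behave well under passage to subsequences and under the choice of ordering. Both general position and strong general position are properties of the underlying point set and are inherited by subsets, so $Q$ and $R$ are again in the respective positions. The property distinguishing $R$ depends only on $R$ viewed as a point sequence in its own right, since colorfulness of a partition of $R$ is defined purely via the induced linear order on $R$; hence it is unaffected by whether $R$ is regarded inside $Q$, inside $P$, or standalone. Consequently there is no genuine obstacle: the substance lies in P\'or's work and in Lemma~\ref{lem:general-position}, and this theorem is simply the bridge that lets us phrase the conclusion for point sequences in general position.
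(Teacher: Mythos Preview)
Your proposal is correct and matches the paper's approach exactly: the paper does not give a separate proof of this theorem but simply remarks that P\'or's result (stated for strong general position) can be rephrased for general position via Lemma~\ref{lem:general-position}. Your write-up supplies precisely the details implicit in that remark, including the care about passing from subsets to subsequences via the inherited ordering.
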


%%%%%%%%%%%%%%%%%%%%%%%%%%%%%%%%%%%%%%%%%%%%%%%%%%%%%%%%%%%%%%%%%%%%%%%%%%%%%%%%
\section{Characterizing Weakly $d$-Tverberg Complexes}\label{sec:weak}

\begin{lemma}\label{lem:colorful-partition}
Let $P = (p_1, p_2, \ldots, p_m)$ be a sequence in $\R^d$ whose minimal Tverberg partitions are exactly the $d$-colorful ones. 
A simplicial complex $\Delta\subseteq 2^{[n]}$ is partition induced on $P$ if and only if $\Delta$ is $d$-colorfully represented by a word $W$ of length $m$. 
\end{lemma}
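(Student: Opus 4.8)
The plan is to prove the biconditional by establishing a tight dictionary between partitions $\{P_1, \dots, P_n\}$ of $P$ (or rather, of a subset of $P$) and words $W$ on alphabet $[n]$. The key observation is that, since $P = (p_1, \dots, p_m)$ is an ordered sequence, any assignment of the points $p_1, \dots, p_m$ to classes $1, \dots, n$ \emph{is} precisely a word $W$ of length $m$ on alphabet $[n]$: reading the class labels in order gives the word, and conversely a word of length $m$ prescribes which class each $p_j$ belongs to. So there is a literal bijection between functions $P \to [n]$ and length-$m$ words on $[n]$, and under this bijection $P_i$ is the set of points labeled $i$. The content of the lemma is then to check that, under this correspondence, the nerve condition ``$\sigma \in \Delta \iff \bigcap_{i \in \sigma} \conv(P_i) \ne \emptyset$'' matches the word condition ``$\sigma \in \Delta \iff W$ contains a $d$-colorful subword on alphabet $\sigma$.''

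**The main step: translating intersection-nonemptiness into colorful subwords.**

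The heart of the argument is the following claim, which I would isolate and prove first: for any $\sigma \subseteq [n]$ of size $r \ge 2$, we have $\bigcap_{i \in \sigma} \conv(P_i) \ne \emptyset$ if and only if $W$ contains a $d$-colorful subword on alphabet $\sigma$. For the forward direction, suppose the intersection is nonempty. By Proposition~\ref{prop:multipartite} there are subsets $P_i' \subseteq P_i$ with $\bigcap_{i \in \sigma} \conv(P_i') \ne \emptyset$ and $\sum |P_i'| \le (d+1)(r-1)+1$ total points; shrinking further to a minimal Tverberg partition among these, we obtain a minimal Tverberg partition of $P$ with $r$ parts indexed by $\sigma$. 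By hypothesis on $P$, every minimal Tverberg partition of $P$ is $d$-colorful, so these parts form a $d$-colorful partition: reading their labels in the order they appear in $P$ yields a $d$-colorful word on alphabet $\sigma$, which is by construction a subword of $W$. For the converse, suppose $W$ has a $d$-colorful subword on alphabet $\sigma$; the positions of this subword single out disjoint subsequences $(Q_i)_{i \in \sigma}$ with $Q_i \subseteq P_i$, and $(Q_i)_{i \in \sigma}$ is a $d$-colorful partition of a subset of $P$. Since the $d$-colorful partitions of $P$ are exactly its minimal Tverberg partitions, $\bigcap_{i \in \sigma} \conv(Q_i) \ne \emptyset$, and a fortiori $\bigcap_{i \in \sigma} \conv(P_i) \supseteq \bigcap_{i \in \sigma} \conv(Q_i) \ne \emptyset$.

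**Assembling the two directions and handling edge cases.**

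With the claim in hand, both directions of the lemma are immediate. If $\Delta$ is partition induced on $P$ via $\{P_1, \dots, P_n\}$, let $W$ be the corresponding word; then for every $\sigma$ with $|\sigma| \ge 2$ the claim gives $\sigma \in \Delta \iff W$ has a $d$-colorful subword on $\sigma$, and for $|\sigma| \le 1$ both sides hold trivially (singletons and $\emptyset$ are always in a simplicial complex, provided the relevant vertices are nonempty; I need the mild observation that a partition-induced complex contains all of $[n]$ as vertices, which one can arrange, or else restrict attention to the actual vertex set). Conversely, if $\Delta$ is $d$-colorfully represented by a length-$m$ word $W$, let $\{P_1, \dots, P_n\}$ be the induced partition of $P$; the claim shows its nerve is exactly $\Delta$. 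The one subtlety I expect to spend a sentence on is the degenerate-size cases ($r = 0, 1$) and ensuring the convention that every $P_i$ is nonempty so that singletons are genuinely faces — this is bookkeeping, not a real obstacle. The genuine content, and the only place geometry enters, is the claim above, and within it the forward direction, where Proposition~\ref{prop:multipartite} plus the defining property of $P$ (minimal Tverberg $=$ $d$-colorful) do all the work.
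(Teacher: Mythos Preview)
Your proposal is correct and follows essentially the same approach as the paper: set up the bijection between length-$m$ words and labelings of $P$, then argue that $\bigcap_{i\in\sigma}\conv(P_i)\ne\emptyset$ iff $W$ has a $d$-colorful subword on~$\sigma$, using that minimal Tverberg partitions of $P$ coincide with the $d$-colorful ones. The paper is slightly terser (it does not explicitly invoke Proposition~\ref{prop:multipartite}, since one can shrink any Tverberg partition to a minimal one directly), and it does not fuss over the $|\sigma|\le 1$ cases, but the substance is the same.
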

\begin{proof}
First suppose that $\Delta$ is partition induced on $P$, say by a partition $\{P_1,\ldots, P_n\}$. 
Let $W$ be the word on $[n]$ whose $i$-th letter is the unique $j$ so that $p_i\in P_j$.
In other words, $W$ is the word obtained by labeling the points in $P$ in sequence according to which part of the partition they belong to. 
We claim that $W$ $d$-colorfully represents $\Delta$. 

To prove this, let $\sigma\subseteq [n]$.
We aim to show that $\sigma$ is a face of $\Delta$ if and only if $W$ contains a $d$-colorful subword on alphabet $\sigma$.
Note that $\sigma$ is a face of $\Delta$ if and only if $\{P_j\mid j\in\sigma\}$ is a Tverberg partition. 
Since the minimal Tverberg partitions of $P$ are exactly the colorful ones, we conclude that $\sigma$ is a face of $\Delta$ if and only if there are $P_j'\subseteq P_j$ so that $\{P_j'\mid j\in\sigma\}$ is a $d$-colorful Tverberg partition. 
The letters corresponding to points in the various $P_j'$ will form a $d$-colorful subword of $W$ on alphabet $\sigma$, and conversely such a $d$-colorful subword allows us to construct appropriate $P_j'$. 
This proves the first half of the lemma. 

For the converse, suppose that $\Delta$ is $d$-colorfully represented by some word $W$ with length $m$. 
Define a partition $\{P_1,\ldots, P_n\}$ of $P$, where $P_j$ consists of the points $p_i$ so that the $i$-th letter of $W$ is $j$. 
A similar analysis shows that $\Delta$ is partition-induced on $P$ by this partition, proving the result.
\end{proof}

\begin{restatetheorem}{\ref{thm:weak-iff-colorful}}
A simplicial complex is weakly $d$-Tverberg if and only if it is $d$-colorfully representable. 
\end{restatetheorem}
\begin{proof}
Let $\Delta$ be a simplicial complex on vertex set $[n]$. 
First suppose that $\Delta$ is weakly $d$-Tverberg. 
Choose a large sequence of points $P$ in $\R^d$ whose minimal Tverberg partitions are the $d$-colorful partitions.
By choosing $P$ large enough, we can guarantee that $\Delta$ is partition-induced on a subsequence of $P$.
A subsequence also has the property that its minimal Tverberg partitions are exactly the $d$-colorful ones, and so Lemma \ref{lem:colorful-partition} implies that $\Delta$ is $d$-colorfully representable.

For the converse, suppose that $\Delta$ is $d$-colorfully representable by a word $W$ of length $m$. 
By Theorem \ref{thm:universal} every sufficiently large point sequence in general position in $\R^d$ contains a subsequence of length $m$ whose Tverberg partitions are exactly the colorful ones.
Lemma \ref{lem:colorful-partition} then implies that $\Delta$ is partition-induced on this subsequence, so $\Delta$ is weakly $d$-Tverberg.
This proves the result. 
\end{proof}

%%%%%%%%%%%%%%%%%%%%%%%%%%%%%%%%%%%%%%%%%%%%%%%%%%%%%%%%%%%%%%%%%%%%%%%%%%%%%%%%%%%%%%%%%%
\section{A Graph That Is Not Weakly $d$-Tverberg}\label{sec:graphs}

We begin by introducing some additional notation. 
Given a word $W$ on alphabet $\sigma$, let $\Delta^d(W)$ be the simplicial complex
\[
\Delta^d(W) \eqdef \{\tau\subseteq \sigma \mid \text{$W$ contains a $d$-colorful subword on $\tau$}\}. 
\]
In other words, $\Delta^d(W)$ is the simplicial complex\footnote{It is not a priori obvious that $\Delta^d(W)$ is a simplicial complex, but this follows from Lemma \ref{lem:monotone}, which we explain in our concluding remarks.} that is $d$-colorfully represented by~$W$. 
If $W$ is any word, let $\red(W)$ denote the \emph{reduced word} formed by deleting any consecutive occurrences of the same letter. 
Observe that $\Delta^d(W) = \Delta^d(\red(W))$.
Finally, if $W$ is a word on alphabet~$\sigma$, and $\tau\subseteq\sigma$, let $W(\tau)$ denote the \emph{restriction} of $W$ to~$\tau$, which is obtained by deleting all letters not in~$\tau$. 
Note that $\Delta^d(W(\tau))$ is the induced subcomplex of $\Delta^d(W)$ on vertex set~$\tau$.

Fix $d\ge 1$.
For a positive integer $n$, let $K(n) \eqdef n(n-1)(d+2)^2 + 1$.
Choose a fixed $n$ large enough that $2^n > \binom{K(n)}{(d+2)^2}$, noting that this is possible because the latter quantity is polynomial in~$n$.
From here on, we let $K$ denote $K(n)$ for our fixed choice of~$n$.
Now define a bipartite graph $G_d$ on a vertex set $A\sqcup B$ as follows.
First, $A$ is simply a set of size $n$.
The second part $B$ consists of $2^{n}(2^K+1)$ vertices: for every $\sigma\subseteq A$, the part $B$ contains $2^K+1$ copies of a vertex whose neighborhood is exactly $\sigma$. 

We aim to prove the following result, from which Theorem \ref{thm:graphs} will follow. 

\begin{theorem}\label{thm:G_d}
The graph $G_d$ is not $d'$-colorfully representable for $d'\le d$.
\end{theorem}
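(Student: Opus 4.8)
The plan is to derive a contradiction from the assumption that some word $W$ on alphabet $A \sqcup B$ $d'$-colorfully represents $G_d$ for some $d' \le d$. The fundamental obstruction to exploit is that a $d'$-colorful subword on a $2$-element alphabet $\{x, y\}$ has bounded length $2(d'+1) - 1 + 1 = 2d'+2 \le 2d+2$, and records the edge $\{x,y\} \in G_d$ iff $W$ contains such a short alternating-type subword. Since every vertex $b \in B$ is non-adjacent to every other vertex of $B$, and adjacent to a prescribed $\sigma \subseteq A$, the way $W$ interleaves the (many) $B$-vertices with the $n$ vertices of $A$ must encode, for each $b$, exactly the set $\sigma$ — but it can only ``see'' $A$-letters within a window of size roughly $d+2$ on either side of each occurrence of $b$. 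The pigeonhole setup ($2^n > \binom{K(n)}{(d+2)^2}$ with $(d+2)^K$ copies per $\sigma$) is designed precisely so that two vertices $b, b'$ with \emph{different} neighborhoods $\sigma \ne \sigma'$ are forced to have ``the same local picture'' in $W$, which is then contradictory.

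**The key steps**, in order, would be: (1) Fix the representing word $W$ and pass to $\red(W)$, which represents the same complex; argue (or cite monotonicity, Lemma~\ref{lem:monotone}) that it suffices to detect facets, i.e. edges and isolated-vertex conditions. (2) For each $b \in B$, since the edge $\{a,b\}$ is present iff there is a $d'$-colorful subword on $\{a,b\}$, and such a subword lives inside a block-structured window, show that membership of $a$ in the neighborhood of $b$ is determined by the multiset of $A$-letters appearing ``close to'' some occurrence of $b$ in $W$ — more precisely, within the first/last $K$ letters on each side, or within some window whose size is controlled by $K = 2n(n-1)(d+2)+1$. (Here the constant $K$ is tuned so that a $d'$-colorful word on an alphabet of size $r \le$ something short cannot span more than $K$ positions around an occurrence of $b$; one needs to be careful that $W$ restricted to $\{a, b\}$ could be long, but its \emph{reduced} form is an alternating word, and a $d'$-colorful subword of it has length $\le 2d+2$.) (3) Encode, for each $b$, a ``local signature'': the pattern of which $A$-vertices appear within distance $K$ of $b$ in $\red(W(A \cup \{b\}))$, or equivalently a choice of at most $(d+2)^2$ positions among the $\le K$ relevant ones (this is where $\binom{K}{(d+2)^2}$ enters). (4) Since $B$ has $(d+2)^K > 2^n \cdot \binom{K}{(d+2)^2}$... — actually since there are $(d+2)^K$ copies for \emph{each} of the $2^n$ subsets $\sigma$, and the number of possible local signatures is at most $\binom{K}{(d+2)^2}$ (or some such polynomial-in-$n$ bound), by pigeonhole two vertices $b, b'$ with $N(b) = \sigma \ne \sigma' = N(b')$ nonetheless realize the same signature. (5) Swapping $b$ for $b'$ in the word $W$ then cannot change which $d'$-colorful subwords on $2$-letter alphabets $\{a, b\}$ exist, forcing $\sigma = \sigma'$, a contradiction.

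**The main obstacle** I anticipate is Step (2): pinning down precisely \emph{which} local window of $W$ around an occurrence of $b$ controls the edges at $b$, and proving it is genuinely bounded. The subtlety is that $W$ need not be reduced globally, and a $d'$-colorful subword witnessing an edge $\{a,b\}$ uses $d'+1$ blocks, each containing one $a$ and one $b$ — so the witnessing subword \emph{does} have bounded length $2d'+2$, but its instances of $a$ and $b$ could be scattered arbitrarily far apart in $W$ unless we are careful. The resolution should be that after reducing $W(\{a,b\})$, the alternating structure means a $d'$-colorful subword on $\{a,b\}$ exists iff $\red(W(\{a,b\}))$ has length $\ge 2d'+1$, i.e. iff $a$ and $b$ alternate enough times — and this in turn is detectable from a bounded local picture only after one sets up the right combinatorial invariant. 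Getting the counting in Steps (3)–(4) to line up with the stated hypothesis $2^n > \binom{K(n)}{(d+2)^2}$ and with $(d+2)^K$ copies will require choosing the signature to have exactly the right granularity; I expect this bookkeeping, rather than any deep idea, to be the delicate part, and I would organize it as one or two preparatory lemmas (one on the length characterization of $d'$-colorful $2$-letter subwords, one on the pigeonhole) before assembling the contradiction.
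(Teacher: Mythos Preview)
Your plan has the pigeonhole ingredients in hand but aims at the wrong contradiction and misreads how the two numerical hypotheses are used. You try to find $b,b'\in B$ with \emph{different} neighborhoods but the same ``local signature'', and then derive a contradiction from the claim that the signature determines $N(b)$. The paper instead finds $b,b'$ with the \emph{same} neighborhood $\sigma$ and identical reduced insertion patterns $\red(I_b)=\red(I_{b'})$ (up to relabeling), and shows that these identical---and necessarily \emph{long}---patterns force $W$ to contain a $d$-colorful subword on $\{b,b'\}$, i.e.\ an edge inside the independent set $B$. That is the contradiction; your proposal never invokes the independence of $B$.

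Concretely, the argument is a two-stage pigeonhole, and the two constants play separate roles. First one proves that $\red(W(A))$ has length at most $n(n-1)(d+2)$ because $A$ is independent; this is what makes $K$ the right scale, and your Step~(2) ``local window'' discussion does not set this up. Next, the bound $2^n>\binom{K}{(d+2)^2}$ is used contrapositively: if every $\sigma\subseteq A$ admitted some $b$ with fewer than $(d+2)^2$ instances of $b$ in $\red(I_b)$, there would be at most $\binom{K}{(d+2)^2}$ such patterns yet $2^n$ distinct neighborhoods to recover from them. Hence some $\sigma$ forces \emph{every} $b$ with $N(b)=\sigma$ to carry at least $(d+2)^2$ instances in $\red(I_b)$. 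Only now does the $(d+2)^K$-fold multiplicity enter: among these $(d+2)^K$ vertices with neighborhood $\sigma$, two have identical $\red(I_b)$, since there are at most $(d+2)^K$ possible patterns. Finally, the ``at least $(d+2)^2$ instances'' guarantee is precisely what lets one extract a $d$-colorful $\{b,b'\}$-subword from the common pattern. Your Step~(3) assumes that a signature of size $\le (d+2)^2$ always determines $N(b)$; the paper shows this fails for some $\sigma$, and exploiting that failure---rather than ruling it out---is the heart of the proof.
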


The remainder of this section is dedicated to proving the theorem above. 
Since $G_d$ contains an isomorphic copy of $G_{d'}$ as an induced subgraph for every $d'\le d$, it will suffice to prove that $G_d$ is not $d$-colorfully representable. 
For contradiction, assume that $G_d$ is $d$-colorfully representable, and let $W$ be a word that $d$-colorfully represents it.
For the remainder of this section we treat $W$ as fixed.

Note that $W$ is a word whose letters come from $A\sqcup B$, where $A$ and $B$ are the two parts of the vertex set of $G_d$ as defined above. 
Throughout this section we typically refer to elements of $A\sqcup B$ as ``letters" since we are considering instances of them in the word $W$, though they are also vertices of the graph $G_d$. \bigskip

\noindent \textbf{Chunks.} 
We will often examine $W(A)$, the restriction of $W$ to the set of letters in $A$.
A \emph{chunk} in $W(A)$ is a maximal subword consisting of consecutive instances of the same letter.
Note that a chunk could consist of only a single letter. 
We will regard the letters in a chunk interchangeably as letters in $W$ and letters in $W(A)$.  \bigskip

\noindent \textbf{Claim 1:}
$W(A)$ contains no more than $n(n-1)(d+2)$ distinct chunks. 
Hence $\red(W(A))$ has length at most $n(n-1)(d+2)$. \smallskip

\noindent \textbf{Proof of Claim 1:}
If $W(A)$ has more than $n(n-1)(d+2)$ chunks, then $\red(W(A))$ has length larger than $n(n-1)(d+2)$. 
Then some letter $a$ appears more than $(n-1)(d+2)$ times in $\red(W(A))$. 
Since consecutive letters in $\red(W(A))$ are distinct, there are at least $(n-1)(d+2)$ gaps between the instances of $a$, filled by the remaining $n-1$ letters. 
But then by pigeonhole principle some remaining letter $a'\neq a$ appears in at least $d+2$ distinct gaps. 
Thus $\red(W(A))$ (and hence $W(A)$, and hence $W$) contains a $d$-colorful subword on alphabet $\{a,a'\}$. Hence $aa'$ is an edge in $G_d$, contradicting the fact that $A$ is an independent set in $G_d$.  \bigskip

\noindent \textbf{Insertion patterns.} 
Let $b\in B$. 
Observe that $W(A\cup\{b\})$ is a $d$-colorful representation of the induced subgraph of $G_d$ on vertex set $A\cup\{b\}$.
Let $I_b$ be a word obtained by deleting as many instances of $b$ from $W(A\cup\{b\})$ as possible without changing the graph that it $d$-colorfully represents.
There could be many possibilities for $I_b$, but we will fix one from here on.
We call our fixed $I_b$ the \emph{insertion pattern} of $b$. 

Note that $I_b$ determines the neighborhood of the vertex $b$ in $G_d$.
Also note that when forming $I_b$ we never delete letters from $A$, and consequently we can regard the chunks of $W(A)$ canonically as subwords of $I_b$. 
Hence we will speak below of ``chunks in $I_b$."
Finally, observe that at most one instance of $b$ occurs between two consecutive chunks in $I_b$---any further occurrences would be redundant. \bigskip

\noindent \textbf{Chunk containment.}
An instance of $b$ that occurs in $I_b$ is \emph{contained in} a chunk if there are letters from that chunk both to its left and to its right.
Note that each instance of $b$ that occurs in $I_b$ is contained in at most one chunk, and could be contained in no chunks if it lies between two adjacent chunks, or at the beginning or end of $I_b$. \bigskip

\noindent \textbf{Claim 2:} 
Let $\mathbb{W}$ be the word obtained from $W(A)$ by shortening every chunk to have length at most $d+2$.
Then for every $b\in B$, $\red(I_b)$ can be obtained by inserting non-consecutive copies of $b$ into $\mathbb{W}$, and reducing the resulting word.
\smallskip

\noindent \textbf{Proof of Claim 2:}
We first claim that for any $b\in B$, no chunk in $I_b$ contains more than $d+1$ copies of $b$. 
Suppose for contradiction that some chunk consisting of copies of a letter $a\in A$ contains more than $d+1$ copies of $b$.
None of these copies of $b$ are consecutive (otherwise we could delete one without changing $\Delta^d(I_b)$) and hence this chunk contains an alternating subword on $\{a, b\}$ of length at least $2d+3 > d+2$. 
We now observe that we could delete a copy of $b$ from this chunk without changing the complex $\Delta^d(I_b)$. 
Indeed, the remaining copies of $b$ in the chunk would still form an alternating subword of length $d+2$ with $a$, and any alternating subwords involving $b$ and a letter $a'\neq a$ can use at most one copy of $b$ from this chunk, and so deleting one of the copies of $b$ does not disrupt these alternating subwords either. 
There are no $d$-colorful subwords on alphabets of size three or more in $I_b$, since $\Delta^d(I_b)$ is a graph. 
We chose $I_b$ so that deleting any copy of $b$ would change $\Delta^d(I_b)$, and so we have arrived at a contradiction.

Since no chunk in $I_b$ contains more than $d+1$ copies of $b$, at most $d+2$ letters from each chunk are preserved in $\red(I_b)$.
In particular, $\red(I_b)$ restricted to $A$ is a subword of $\mathbb{W}$, and $\red(I_b)$ can thus be obtained from $\mathbb{W}$ by inserting copies of $b$ and then reducing.
Since $\red(I_b)$ is reduced, we need not insert any consecutive copies of $b$. 
Hence the claim follows.
\bigskip

\noindent \textbf{Claim 3:} 
There exists $\sigma\subseteq A$ so that for every $b\in B$ with neighborhood $\sigma$, $\red(I_b)$ contains at least $(d+2)^2$ instances of the letter $b$.\smallskip

\noindent \textbf{Proof of Claim 3:}
Suppose for contradiction that every $\sigma\subseteq A$ admits some $b\in B$ with neighborhood $\sigma$ and $\red(I_b)$ containing fewer than $(d+2)$ copies of $b$.
Now, consider the process described in Claim 2.
The word $\mathbb{W}$ has length at most $n(n-1)(d+2)^2$ by Claim 1.
Hence each instance of $b$ has at most $K = n(n-1)(d+2)^2 + 1$ places it can be inserted, and by Claim 2 the choices of where to insert $b$ determine $\red(I_b)$.
With at most $(d+2)^2$ instances of $b$ to insert, we obtain at most $\binom{K}{(d+2)^2}$ distinct possibilities for $\red(I_b)$, even as $\sigma$ varies over the $2^n> \binom{K}{(d+2)^2}$ possible subsets of $A$. 
This contradicts the fact that $\sigma$ can be recovered from $\red(I_b)$, and the claim follows.
\bigskip

\noindent \textbf{Claim 4:}
For any $\sigma\subseteq A$, there exist distinct letters $b,b'\in B$ whose neighborhood is $\sigma$, and which have $\red(I_{b})$ and $\red(I_{b'})$ identical, up to replacing $b$ by $b'$.\smallskip

\noindent \textbf{Proof of Claim 4:}
We again use Claim 2. 
There are at most $2^K$ ways to insert copies of a letter $b$ into $\mathbb{W}$, but we constructed $G_d$ so that there are more than $2^K$ vertices with neighborhood exactly $\sigma$. 
Hence at least two of them have the same reduced insertion patterns.\bigskip

\noindent \textbf{Claim 5:} 
Let $\sigma\subseteq A$ be as in Claim 3, and let $b$ and $b'$ be as in Claim 4 with this choice of $\sigma$.
Then $W$ contains a $d$-colorful subword on the alphabet $\{b,b'\}$. \smallskip

\noindent \textbf{Proof of Claim 5:}
We have two relevant facts. 
First, $\red(I_{b})$ and $\red(I_{b'})$ are the same up to replacing $b$ by $b'$. 
Second, each of these reduced insertion patterns contains at least $(d+2)^2$ instances of $b$ and $b'$ respectively.
By Claim 2, the longest consecutive sequence in $\red(I_{b})$ which consists of $b$ alternating with some letter $a\in A$ has length at most $d+2$. 
For each such sequence, let us delete all alternations except for one, obtaining a new word $W_b$. 
Note that $W_b$ still has at least $d+2$ copies of $b$ in it, and instances of letters from $A$ in $W_b$ are in one-to-one correspondence with chunks in $W(A)$. 

Let $W_{b'}$ be obtained by performing the corresponding deletions in $\red(I_{b'})$.
Each instance of $b$ in $W_b$ (respectively, $b'$ in $W_{b'}$) can be associated to an instance of $b$ (respectively, $b'$) in our original word~$W$. 
Let $W'$ be the subword of $W$ on these instances of $b$ and~$b'$. 
Note that $W'$ has length at least $2(d+2)$, and does not contain three consecutive instances of $b$ (respectively $b'$) since each instance of $b$ is paired with an instance of $b'$ which appears within or between the same chunks in~$W$. 
Hence $\red(W')$ has length at least $d+2$, and is a $d$-colorful subword of $W$ on alphabet $\{b,b'\}$. 
This proves the claim.  \bigskip

\noindent \textbf{Summary.} 
We argued that $\red(W(A))$ has bounded length, and used this fact to show that there must be some $\sigma\subseteq A$ so that every letter in $B$ with neighborhood $\sigma$ appears a large number of times, even when we forget the other letters in $B$ and delete duplicate copies. 
Since there are many letters in $B$ with neighborhood $\sigma$, we were able to argue that two of them must appear with the same patterns.
In Claim 5 we showed that these identical patterns imply a $d$-colorful subword of $W$ on these two letters.
But this is a contradiction, since there is not an edge between these two letters in $G_d$.
Hence $G_d$ is not $d$-colorfully representable.
This proves Theorem \ref{thm:G_d}, and hence Theorem \ref{thm:graphs}. 

\section{Concluding Remarks and Questions}

We conclude by establishing some basic facts about $d$-colorfully representable complexes and (weakly) $d$-Tverberg complexes, and raising several questions. 
First, we show that any $d$-colorful word on alphabet $\sigma$ contains a $d$-colorful subword on every alphabet $\tau\subseteq \sigma$.
This implies that every word $d$-colorfully represents \emph{some} simplicial complex; that is, given a word $W$ on alphabet $[n]$, the set of $\sigma$ so that $W$ contains a $d$-colorful subword on $\sigma$ actually does form a simplicial complex, as we would hope.
One could prove this fact geometrically by way of Lemma \ref{lem:colorful-partition}---a colorful word on $\sigma$ would give a Tverberg partition of a point set whose minimal Tverberg partitions are colorful, and deleting a part would yield Tverberg partition with fewer parts, then the lemma applied in converse would yield a colorful subword on a smaller alphabet. 
Below, we give a purely combinatorial proof. 

\begin{lemma}\label{lem:monotone}
If $W$ is a $d$-colorful word on alphabet $\sigma$, then $W$ contains a $d$-colorful subword on $\tau$ for any $\tau\subseteq \sigma$.
\end{lemma}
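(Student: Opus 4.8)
The natural approach is induction on $|\sigma \setminus \tau|$; it suffices to handle the case where $\tau = \sigma \setminus \{x\}$ for a single letter $x$, and then iterate. So suppose $W$ is a $d$-colorful word on alphabet $\sigma$ with $|\sigma| = r$, and we wish to extract a $d$-colorful subword on $\sigma \setminus \{x\}$, which has $r-1$ letters and so must have length $(d+1)(r-2)+1 = (d+1)(r-1) - d$. Recall that $W$ has length $(d+1)(r-1)+1$ and is divided into $d+1$ blocks, each of which contains every letter of $\sigma$ exactly once, with consecutive blocks overlapping in a single index. The plan is to delete the letter $x$ appropriately: there are exactly $d+1$ instances of $x$ in $W$ (one per block, though some may be shared boundary positions—actually each block contains $x$ once, but an instance at a shared boundary counts for two blocks). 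Deleting all $d+1$ instances of $x$ would remove too many or too few letters depending on how many lie at block boundaries, so the delicate part is to delete exactly $d$ of them in a way that leaves a valid $d$-colorful structure on the remaining $r-1$ letters.

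Here is the key combinatorial observation I would exploit. Think of $W$ as built from $d+1$ blocks $\beta_1, \dots, \beta_{d+1}$, where block $\beta_i$ occupies positions $(i-1)(r-1)+1$ through $i(r-1)+1$; block $\beta_i$ and $\beta_{i+1}$ share exactly the single position $i(r-1)+1$. Each block is a permutation of $\sigma$. The letter $x$ appears once in each $\beta_i$. I want to choose $d$ of the $d+1$ blocks and, from within each chosen block, delete the instance of $x$ that lies in that block—but I must be careful that I am deleting $d$ distinct positions of $W$, and that after deletion the remaining word, reindexed, has $d+1$ blocks (now of size $r-1$, with the right overlaps) each of which is a permutation of $\sigma \setminus \{x\}$. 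The cleanest way: consider the $d+1$ "internal" boundary positions plus the arrangement of $x$'s. If the instance of $x$ in block $\beta_i$ is at a boundary position shared with $\beta_{i+1}$, then that one deletion simultaneously removes $x$ from two blocks. By a counting/pigeonhole argument on how the $x$-instances distribute among interiors versus boundaries, I can always find a set of exactly $d$ positions to delete so that each of the $d+1$ new blocks of length $r-1$ (after collapsing) is missing exactly its copy of $x$ and is otherwise intact—equivalently, I delete one $x$ "per gap" between consecutive new block-boundaries. Concretely: the old word has $d+2$ block-boundary positions (the two ends and $d$ internal ones); after removing $d$ letters the new word of length $(d+1)(r-1)-d$ should have its $d+2$ new boundaries at positions $1, (r-1), 2(r-1)-1, \dots$; matching these up forces precisely which $x$'s to drop.

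The main obstacle I anticipate is the bookkeeping around boundary-shared instances of $x$: an instance of $x$ sitting at a shared boundary of $\beta_i$ and $\beta_{i+1}$ behaves differently from one sitting in a block interior, and I need to argue that regardless of the configuration there is a valid choice of $d$ deletions. I expect this to come down to a short averaging argument—$x$ appears in all $d+1$ blocks but there are only $d+2$ boundary slots and $(d+1)(r-1)+1$ total slots, so not too many $x$'s can be at boundaries—combined with an explicit greedy deletion scanning the blocks left to right: in each block, delete the copy of $x$ unless it was already deleted as a shared boundary with the previous block, and verify a simple invariant (that after processing block $\beta_i$, the prefix through the current boundary is a valid truncated $d$-colorful word on $\sigma \setminus \{x\}$ with the expected number of completed blocks). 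Once the invariant is set up correctly the verification of each case is routine; I'd present the invariant explicitly and check the two cases (interior $x$ vs. boundary $x$) at each step.
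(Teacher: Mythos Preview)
Your reduction to removing one letter $x$ is right, and so is the target length $(d+1)(r-2)+1=(d+1)(r-1)-d$. But there is an arithmetic slip with consequences: the original word has length $(d+1)(r-1)+1$, so you must delete $d+1$ positions, not $d$. More seriously, the plan to delete only instances of $x$ cannot succeed in general. If $x$ happens to sit at $k$ of the internal overlap indices, then $x$ occupies only $d+1-k$ distinct positions in $W$ (one boundary occurrence serves two blocks). For $k\ge 1$ there simply are not $d+1$ copies of $x$ to remove, so your greedy sweep---``delete the copy of $x$ in each block unless it was already deleted as a shared boundary''---will terminate after too few deletions and leave a word that is too long to be $d$-colorful on $\sigma\setminus\{x\}$. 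Concretely, take $d=3$, $\sigma=\{1,2,x\}$, and $W=12x121x12$: here $x$ sits at both non-adjacent overlap indices, your procedure deletes exactly those two letters, and you are left with a word of length $7$ rather than the required~$5$.

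The missing idea is that whenever $x$ sits at the overlap of two consecutive blocks you must compensate by deleting a further \emph{non-$x$} letter: the paper lets $j$ be the letter just before this boundary occurrence of $x$ and deletes the copy of $j$ in the block beginning at that overlap. This restores the count to $d+1$ deletions and arranges that the surviving letter $j$ becomes the new overlap between the corresponding shrunken blocks. Your averaging intuition (``not too many $x$'s can be at boundaries'') points the wrong way---boundary occurrences are precisely the problem because they make $x$ appear \emph{less} often, and no pigeonhole argument on $x$'s alone rescues this; you need to touch other letters.
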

\begin{proof}
It suffices to prove the result when $\tau = \sigma\setminus \{i\}$ for some $i\in\sigma$. 
Consider the $(d+1)$-many blocks of length $r$ in $W$, which overlap at $d$-many indices. 
Whenever $i$ occurs at an overlap index, let $j$ be the letter immediately preceding it. 
We delete $i$ and the occurrence of $j$ in the block starting at the overlap index. 
Further, we delete all other occurrences of $i$ (these did not appear at overlap indices in the original word).

We have deleted $d+1$ letters in total (one for each block that contains letter $i$ in its ``interior", and two for every pair of blocks which overlap with letter $i$) so we have obtained a word $W'$ of length $(d+1)(r-2) + 1$.
We claim that $W'$ is a $d$-colorful word on alphabet $\sigma\setminus \{i\}$.
Consider any two consecutive blocks in the original word.
After deletions, these become consecutive blocks in~$W'$. 
We consider several cases.

If the original blocks overlapped with letter $i$, then the new blocks overlap at the letter immediately preceding $i$ in the original word, and our choice to delete the second occurrence of this letter guarantees that the new blocks contain each letter from $\sigma\setminus\{i\}$ exactly once. 
Otherwise, the original blocks did not overlap at letter $i$. 
In this case, the second block is altered only by deleting the letter $i$, and so contains every letter from $\sigma\setminus\{i\}$ exactly once.
The first block is altered by deleting letter $i$, and possibly another letter $j$, if $i$ is the first letter in the block. 
If we delete the letter $j$, our choice of $j$ guarantees that it now appears at the first index in the new block. 
In either case, the first block contains every letter from $\sigma\setminus\{i\}$ and the result follows. 
\end{proof}

Lemma \ref{lem:monotone} lets us quickly prove that every complex is $d$-colorfully representable for $d$ large enough.
When combined with Theorem \ref{thm:weak-iff-colorful} we obtain the corollary that every complex is weakly $d$-Tverberg for $d$ large enough.
The proof below is analogous to Oliveros and Torres' proof that every graph is general $d$-word-representable for $d$ large enough \cite[Proposition 2]{oliveros_torres}.

\begin{proposition}\label{prop:m+1}
Every simplicial complex $\Delta$ with $m$ facets is $(m+1)$-colorfully representable.
\end{proposition}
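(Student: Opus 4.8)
The plan is to represent $\Delta$ by concatenating one small gadget per facet. Write the facets of $\Delta$ as $F_1,\dots,F_m$, put $r_i\eqdef|F_i|$, fix an arbitrary ordering $(s^i_1,\dots,s^i_{r_i})$ of each $F_i$, and let $W_i$ be the prefix of length $(m+2)(r_i-1)+1$ of the periodic word $(s^i_1 s^i_2\cdots s^i_{r_i})^{\infty}$. Every window of $r_i$ consecutive letters of this periodic word is a permutation of $F_i$, and the $m+2$ blocks prescribed by the definition of an $(m+1)$-colorful word are exactly such windows; hence $W_i$ is $(m+1)$-colorful on alphabet $F_i$. Set $W\eqdef W_1W_2\cdots W_m$, a word on alphabet $[n]$ (it only uses letters of $\bigcup_i F_i$). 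The claim is that $W$ is an $(m+1)$-colorful representation of $\Delta$.

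The forward inclusion is immediate: if $\sigma\in\Delta$ then $\sigma\subseteq F_i$ for some $i$, so by Lemma~\ref{lem:monotone} the $(m+1)$-colorful word $W_i$ --- and hence $W$ --- contains an $(m+1)$-colorful subword on $\sigma$. (Sets $\sigma$ with $|\sigma|\le 1$ are dealt with by the usual conventions, together with the observation that every vertex of $\Delta$ occurs in some $W_i$, hence in $W$.)

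The reverse inclusion is the step I expect to be the crux, although it is a short counting argument. Suppose $W$ contains an $(m+1)$-colorful subword $U$ on an alphabet $\sigma$ with $r\eqdef|\sigma|\ge 2$; then $U$ has length $(m+2)(r-1)+1$, and its $m+2$ blocks are runs of $r$ consecutive positions of $U$, each a permutation of $\sigma$, with consecutive blocks sharing one position and non-consecutive blocks disjoint. The $r-1$ internal ``gaps'' (between consecutive positions) of the successive blocks tile the $(m+2)(r-1)$ gaps of $U$, so every gap of $U$ lies inside exactly one block. On the other hand, the decomposition $W=W_1\cdots W_m$ splits $U$ into consecutive (possibly empty) segments $U^{(1)},\dots,U^{(m)}$, with $U^{(i)}$ a subword of $W_i$, and the passage from one segment to the next takes place at no more than $m-1$ gaps of $U$. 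Hence at most $m-1$ of the $m+2$ blocks contain such a ``crossing'' gap, so at least $(m+2)-(m-1)=3$ blocks have all their positions inside a single segment $U^{(i)}$. Fixing one such block $B$: all its letters come from $W_i$, hence lie in $F_i$; but $B$ contains every letter of $\sigma$, so $\sigma\subseteq F_i$ and therefore $\sigma\in\Delta$. The only point requiring care is the bookkeeping that the gaps of $U$ are tiled by the blocks; once that is set up, the pigeonhole step and the rest are routine.
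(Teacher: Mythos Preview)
Your proof is correct and takes essentially the same approach as the paper: build $W$ by concatenating one $(m+1)$-colorful word per facet, and for the reverse inclusion argue that a colorful subword on a non-face would force more transitions between the $W_i$ than the concatenation provides. The only cosmetic difference is that you package the reverse step as a pigeonhole count on ``crossing gaps'' versus the $m+2$ blocks, whereas the paper argues inductively that the rightmost letter of the $i$-th block must lie beyond $W_i$; both encode the same observation.
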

\begin{proof}
Let $\sigma_1, \ldots, \sigma_m$ be the facets of $\Delta$, and for each $i\in[m]$ let $W_i$ be an $(m+1)$-colorful word on alphabet $\sigma_i$. 
Consider the word $W \eqdef W_1W_2\cdots W_m$ obtained by concatenating these colorful words.
We claim that $\Delta$ is $(m+1)$-colorfully represented by $W$.

Clearly $W$ contains an $(m+1)$-colorful word on alphabet $\sigma$ for every $\sigma\in \Delta$.
For the reverse containment, let $F$ be any non-face of $\Delta$. 
Suppose for contradiction that $F$ is a face of $\Delta^{m+1}(W)$.
Then $W$ contains an $(m+1)$-colorful subword $W_F$ on alphabet $F$.
The word $W_F$ has $m+1$ blocks of length $|F|$, each overlapping at their endpoints and containing the letters of $F$ exactly once.
Since $F$ is not a face of $\Delta$, none of the blocks in $W_F$ can consist of letters from a single $W_i$. 
Thus the rightmost letter of the first block in $W_F$ must appear after $W_1$ in $W$, and more generally the rightmost letter of the $i$-th block in $W_F$ must appear after $W_i$ in $W$. 
But $W$ is comprised of only $m$-many $W_i$'s, while $W_F$ has $m+1$ blocks.
Thus we have arrived at a contradiction, and we conclude that $\Delta$ is $d$-colorfully representable as desired. 
\end{proof}

\begin{corollary}
Every simplicial complex with $m$ facets is weakly $(m+1)$-Tverberg.
\end{corollary}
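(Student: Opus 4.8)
The plan is to deduce this corollary immediately from the two results we already have in hand. By Proposition~\ref{prop:m+1}, any simplicial complex $\Delta$ with $m$ facets is $(m+1)$-colorfully representable. By Theorem~\ref{thm:weak-iff-colorful}, being $(m+1)$-colorfully representable is equivalent to being weakly $(m+1)$-Tverberg. Chaining these two implications gives the statement, so the ``proof'' is essentially a single sentence.

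There is, strictly speaking, no main obstacle here: the real work was done in establishing Proposition~\ref{prop:m+1} (where the key idea is concatenating $(m+1)$-colorful words on the facets, and a pigeonhole argument on blocks shows no extra faces appear) and in proving Theorem~\ref{thm:weak-iff-colorful} (which relies on Lemma~\ref{lem:colorful-partition} and P\'or's universality result, Theorem~\ref{thm:universal}). If I wanted to unwind the argument without citing Theorem~\ref{thm:weak-iff-colorful} directly, I would take the word $W = W_1 W_2 \cdots W_m$ from the proof of Proposition~\ref{prop:m+1}, let $m' = |W|$, and invoke Theorem~\ref{thm:universal} to find inside any sufficiently large general-position point sequence in $\R^{m+1}$ a subsequence of length $m'$ whose minimal Tverberg partitions are exactly the colorful ones; then Lemma~\ref{lem:colorful-partition} says $\Delta$ is partition-induced on that subsequence, which is exactly the definition of weakly $(m+1)$-Tverberg.

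\begin{proof}
By Proposition~\ref{prop:m+1}, $\Delta$ is $(m+1)$-colorfully representable, and by Theorem~\ref{thm:weak-iff-colorful} this is equivalent to being weakly $(m+1)$-Tverberg.
\end{proof}
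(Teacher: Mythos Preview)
Your proof is correct and matches the paper's approach exactly: the paper states this corollary without a separate proof, treating it as immediate from combining Proposition~\ref{prop:m+1} with Theorem~\ref{thm:weak-iff-colorful}, which is precisely what you do.
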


Recall that a cone is a simplicial complex whose facets all share at least one common vertex.
We note that the classes of $d$-Tverberg complexes and weakly $d$-Tverberg complexes coincide for cones. 
Combined with the corollary above, we obtain that every cone is $d$-Tverberg for $d$ large enough.

\begin{proposition}\label{prop:cone}
Let $\Delta$ be a simplicial complex that is a cone.
Then $\Delta$ is $d$-Tverberg if and only if $\Delta$ is weakly $d$-Tverberg.
\end{proposition}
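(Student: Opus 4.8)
The plan is to establish the nontrivial direction: if a cone $\Delta$ is weakly $d$-Tverberg, then it is $d$-Tverberg. The reverse implication holds for every simplicial complex, since a partition of a point set $P$ is in particular a partition of a subset of $P$ (namely of $P$ itself), so the constant witnessing that $\Delta$ is $d$-Tverberg also witnesses that it is weakly $d$-Tverberg.

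First I would fix, using the cone hypothesis, a vertex $v$ contained in every facet of $\Delta$; after relabeling, assume $v = n$. The structural fact I will use repeatedly is that $\sigma \in \Delta$ if and only if $\sigma \setminus \{n\} \in \Delta$: downward closure gives one direction, and for the other, $\sigma \setminus \{n\}$ lies in some facet, which also contains $n$, hence contains $\sigma$. In particular $\{n\} \in \Delta$, and taking contrapositives, $\sigma \notin \Delta$ forces $\sigma \setminus \{n\} \notin \Delta$ (and then $|\sigma|\ge 2$ whenever $n\in\sigma\notin\Delta$, so $\sigma\setminus\{n\}$ is a genuine nonempty non-face).

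Now let $C$ be a constant witnessing that $\Delta$ is weakly $d$-Tverberg, and let $P \subseteq \R^d$ be in general position with $|P| \ge C$. Take pairwise disjoint $P_1, \dots, P_n \subseteq P$ with $\bigcap_{i \in \sigma} \conv(P_i) \ne \emptyset$ if and only if $\sigma \in \Delta$. The idea is to absorb all surplus points into the apex part: set $P_n' \eqdef P_n \cup \big(P \setminus \bigcup_{i=1}^n P_i\big)$ and $P_i' \eqdef P_i$ for $i < n$, so that $\{P_1', \dots, P_n'\}$ is a genuine partition of $P$. I then check its nerve is still $\Delta$. For $\sigma \in \Delta$, monotonicity of convex hulls gives $\bigcap_{i\in\sigma}\conv(P_i') \supseteq \bigcap_{i\in\sigma}\conv(P_i) \ne \emptyset$. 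For $\sigma \notin \Delta$ with $n \notin \sigma$, the parts indexed by $\sigma$ are unchanged, so $\bigcap_{i\in\sigma}\conv(P_i') = \bigcap_{i\in\sigma}\conv(P_i) = \emptyset$. The one case in which enlarging $P_n$ could conceivably do damage is $\sigma \notin \Delta$ with $n \in \sigma$; here the cone hypothesis saves us, because $\sigma \setminus \{n\} \notin \Delta$ and the parts indexed by $\sigma \setminus \{n\}$ were not modified, so $\bigcap_{i\in\sigma}\conv(P_i') \subseteq \bigcap_{i\in\sigma\setminus\{n\}}\conv(P_i') = \bigcap_{i\in\sigma\setminus\{n\}}\conv(P_i) = \emptyset$. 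Hence $\{P_1', \dots, P_n'\}$ induces $\Delta$, and the same constant $C$ shows $\Delta$ is $d$-Tverberg.

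The only subtlety — essentially the whole content of the argument — is recognizing that surplus points should be routed to the apex part and that this is harmless; once one observes that any non-face containing the apex already contains a non-face supported on the untouched parts, there is nothing geometric left to prove. I therefore do not anticipate a genuine obstacle beyond this bookkeeping.
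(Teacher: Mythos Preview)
Your proof is correct and follows exactly the same approach as the paper: absorb all surplus points into the part corresponding to the cone vertex and observe that the nerve is unchanged. The paper states this in one line without your case analysis, but the verification you carry out is precisely what makes that line work.
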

\begin{proof}
Every $d$-Tverberg complex is also weakly $d$-Tverberg, so we only need to argue that if $\Delta$ is weakly $d$-Tverberg then it is $d$-Tverberg.
Let $P$ be a point set in $\R^d$ which is large enough that $\Delta$ is partition induced on some subset $S$ of $P$.
By adding the points of $P\setminus S$ to the part corresponding to the cone vertex in $\Delta$, we obtain a partition of the entire point set $P$ inducing the same nerve. 
\end{proof}

\begin{corollary}
Every cone with $m$ facets is $(m+1)$-Tverberg. 
\end{corollary}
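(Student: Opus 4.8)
The plan is to simply chain together the three results immediately preceding this corollary. By Proposition \ref{prop:m+1}, a cone $\Delta$ with $m$ facets is $(m+1)$-colorfully representable, and hence by Theorem \ref{thm:weak-iff-colorful} it is weakly $(m+1)$-Tverberg. Equivalently, one may invoke the corollary stated just after Proposition \ref{prop:m+1}, which already records that every complex with $m$ facets is weakly $(m+1)$-Tverberg.

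Then I would apply Proposition \ref{prop:cone}: since $\Delta$ is a cone, being weakly $(m+1)$-Tverberg is equivalent to being $(m+1)$-Tverberg. This immediately yields the conclusion.

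There is no real obstacle here; the corollary is a direct consequence of the machinery already developed. The only thing to double-check is that the constant ``$m$'' in both appeals is the number of facets of the \emph{same} complex $\Delta$, which it is, so the index $m+1$ is consistent throughout. A one-line proof of the form ``Combine Proposition \ref{prop:m+1} (or the preceding corollary) with Proposition \ref{prop:cone}'' suffices.

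\begin{proof}
By the corollary following Proposition \ref{prop:m+1}, every simplicial complex with $m$ facets is weakly $(m+1)$-Tverberg; in particular a cone $\Delta$ with $m$ facets is weakly $(m+1)$-Tverberg. Since $\Delta$ is a cone, Proposition \ref{prop:cone} shows that $\Delta$ is $(m+1)$-Tverberg.
\end{proof}
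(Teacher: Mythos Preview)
Your proposal is correct and matches the paper's intended argument exactly: the corollary is stated in the paper without a separate proof, as it follows immediately by combining the preceding corollary (every complex with $m$ facets is weakly $(m+1)$-Tverberg) with Proposition~\ref{prop:cone}.
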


We conclude by explaining that being weakly $d$-Tverberg is a monotone property in the parameter $d$.
We are grateful to Attila P\'or for pointing out the proof of this result, which we originally posed as a question. 

\begin{proposition}\label{prop:monotone}
If $\Delta\subseteq 2^{[n]}$ is $d$-colorfully representable, then it is also $(d+1)$-colorfully representable.
In particular, if $\Delta$ is weakly $d$-Tverberg then $\Delta$ is also weakly $(d+1)$-Tverberg.
\end{proposition}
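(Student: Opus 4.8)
The plan is to prove the combinatorial statement directly, and then deduce the statement about weakly Tverberg complexes from Theorem~\ref{thm:weak-iff-colorful}. So let $W$ be a word on $[n]$ with $\Delta = \Delta^d(W)$. I would define $\pi$ to be the word that lists each letter occurring in $W$ exactly once, in order of \emph{decreasing} position of last occurrence in $W$ (so the letter whose last occurrence in $W$ comes latest appears first in $\pi$). The claim is that $W' \eqdef W\pi$ satisfies $\Delta^{d+1}(W') = \Delta$, which is exactly the assertion that $\Delta$ is $(d+1)$-colorfully representable.

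For the inclusion $\Delta \subseteq \Delta^{d+1}(W')$, I would fix $\sigma \in \Delta$ and a $d$-colorful subword $B_1 | \cdots | B_{d+1}$ of $W$ on $\sigma$, where the $B_i$ are the blocks (permutations of $\sigma$, consecutive ones sharing an endpoint). Letting $x$ be the last letter of $B_{d+1}$, one forms a new block $B_{d+2}$ whose first letter is that occurrence of $x$ inside $W$ and whose remaining $r-1$ letters ($r = |\sigma|$) are occurrences of the letters of $\sigma\setminus\{x\}$ read off from $\pi$; then $B_1 | \cdots | B_{d+2}$ is a $(d+1)$-colorful subword of $W'$ on $\sigma$. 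This step uses nothing about $\pi$ beyond its being a permutation of the alphabet of $W$; the particular ordering of $\pi$ only matters for the reverse inclusion.

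For $\Delta^{d+1}(W') \subseteq \Delta$, I would argue contrapositively: assume $\tau \notin \Delta$ but $W'$ contains a $(d+1)$-colorful subword $V = B_1 | \cdots | B_{d+2}$ on $\tau$, with $r = |\tau|$, and derive a contradiction. Split $V$ into the part lying in $W$ and the part lying in $\pi$; the latter is a subword of $\pi$ using only letters of $\tau$, so it has at most $r$ letters. If it has at most $r-1$ letters, the part in $W$ contains all of $B_1 | \cdots | B_{d+1}$, which is already a $d$-colorful subword of $W$ on $\tau$, contradicting $\tau\notin\Delta$. If it has exactly $r$ letters, it must coincide with the last block $B_{d+2} = \pi|_\tau$; its first letter $y$ is then, by construction of $\pi$, the letter of $\tau$ whose last occurrence in $W$ is latest. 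In this case $B_{d+1}$ has its final letter $y$ in $\pi$ and its first $r-1$ letters $c_1, \ldots, c_{r-1}$ (an enumeration of $\tau\setminus\{y\}$) in $W$, so that $B_1 | \cdots | B_d | (c_1\cdots c_{r-1})$ is a subword of $W$. Since $c_{r-1}\in\tau$ occurs in $W$ at some position $q$ and $c_{r-1}\neq y$, the maximality in the definition of $y$ forces an occurrence of $y$ in $W$ strictly beyond $q$; appending it completes $c_1\cdots c_{r-1}$ to a block $B_{d+1}'$, so $B_1 | \cdots | B_d | B_{d+1}'$ is a $d$-colorful subword of $W$ on $\tau$ — again contradicting $\tau\notin\Delta$.

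Having established $\Delta^{d+1}(W') = \Delta$, the ``in particular'' clause is immediate: a weakly $d$-Tverberg complex is $d$-colorfully representable by Theorem~\ref{thm:weak-iff-colorful}, hence $(d+1)$-colorfully representable, hence weakly $(d+1)$-Tverberg by Theorem~\ref{thm:weak-iff-colorful} again. I expect the reverse inclusion to be the only real difficulty: the danger is that a $(d+1)$-colorful subword of $W\pi$ absorbs an entire terminal block from $\pi$, leaving only ``$d$ full blocks plus a length-$(r-1)$ fragment'' inside $W$; the whole point of ordering $\pi$ by last occurrence is to guarantee that the one letter missing from that fragment can always be found again later in $W$, which is what allows the fragment to be completed to a genuine $d$-colorful subword of $W$.
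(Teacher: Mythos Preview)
Your proof is correct and follows essentially the same approach as the paper's: concatenate a single permutation of the alphabet to one end of $W$, chosen so that the overlap letter between the ``new'' block and the rest can always be re-found inside $W$. The only difference is a mirror symmetry---the paper prepends the permutation (ordered by \emph{first} occurrence) and argues about the first block, whereas you append it (ordered by \emph{last} occurrence) and argue about the last block; the two arguments are formally dual.
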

\begin{proof}
Let $W$ be a word on $[n]$ which $d$-colorfully represents $\Delta$.
By applying a permutation, we may assume that if $i < j$, then the first instance of $i$ in $W$ appears before the first instance of~$j$.
Then consider the word $W'$ which is obtained from $W$ by concatenating all letters from $[n]$ in reverse order to the beginning of $W$.
That is, $W' \eqdef n(n-1)(n-2)\cdots 321 W$.

Observe that if $W$ contains a $d$-colorful word on alphabet $\sigma$, then $W'$ contains a $(d+1)$-colorful word on $\sigma$ as well, since we may use our newly concatenated letters to add one more block to any $d$-colorful subword of $W$. 
Moreover, we claim that the converse holds.
If $W''$ is a $(d+1)$-colorful subword of $W'$ on alphabet $\sigma$, then the only letters of $W''$ that are not from $W$ appear in the first block of $W''$. 
If the letter where the first and second blocks of $W''$ overlap comes from $W$, then we immediately obtain a $d$-colorful subword of $W$ on alphabet $\sigma$.
Otherwise, the letter where this overlap occurs is equal to the minimum of $\sigma$.
But then an instance of this letter occurs in $W$ before all instances of other letters in $\sigma$.
Hence we also obtain a $d$-colorful subword of $W$ on $\sigma$ in this case. 
This proves the result. 
\end{proof}

\noindent Our results above motivate several further questions, which we pose below.
For the third question, note that it is at least a decidable problem to determine whether or not a simplicial complex is weakly $d$-Tverberg, since a word that $d$-colorfully represents it requires a bounded number of letters to represent each face. 
However, it is not known if one can algorithmically decide whether or not a simplicial complex is $d$-Tverberg. 
If recognizing $d$-Tverberg complexes is an undecidable problem, this would additionally provide a negative answer to our second question below.

Our construction of bipartite graphs that are not $d$-Tverberg used a very large number of vertices. 
It is likely that there are much smaller bipartite graphs that are not $d$-Tverberg, and for $d=3$ Oliveros and Torres~\cite{oliveros_torres} proposed a candidate on ten vertices, motivating our fourth question.

\begin{enumerate}
    \item Is every $d$-Tverberg complex also $(d+1)$-Tverberg?
    \item Do the classes of $d$-Tverberg and weakly $d$-Tverberg complexes coincide?
    \item For a fixed $d\ge 2$, what is the computational complexity of deciding whether or not a given simplicial complex is (weakly) $d$-Tverberg?
    \item How many vertices are needed to construct a bipartite graph that is not $3$-Tverberg?
\end{enumerate}

\section*{Acknowledgements}

We are grateful to Jes\'us De Loera, Deborah Oliveros, and Attila P\'or for helpful discussions. 
We also extend our thanks to the anonymous referees for their feedback.

\bibliographystyle{plain}
\bibliography{d-tverberg.bib}

\end{document}